\g@addto@macro\th@plain{\thm@headpunct{:}}
\theoremstyle{plain}
\def\moverlay{\mathpalette\mov@rlay}
\def\mov@rlay#1#2{\leavevmode\vtop{%
		\baselineskip\z@skip \lineskiplimit-\maxdimen
		\ialign{\hfil$\m@th#1##$\hfil\cr#2\crcr}}}
\newcommand{\charfusion}[3][\mathord]{
	#1{\ifx#1\mathop\vphantom{#2}\fi
		\mathpalette\mov@rlay{#2\cr#3}
	}
	\ifx#1\mathop\expandafter\displaylimits\fi}
\newtheorem{theorem}{Theorem}[section]
\newtheorem{corollary}[theorem]{Corollary}
\newtheorem{lemma}[theorem]{Lemma}
\newtheorem{proposition}[theorem]{Proposition}
\newtheorem*{theorem*}{Theorem}
\newtheorem*{lemma*}{Lemma}
\newtheorem{definition}[theorem]{Definition}
\let\olddefinition\definition
\renewcommand{\definition}{\olddefinition\normalfont}
\newtheorem*{definition*}{Definition}
\newtheorem*{remark*}{Remark}
\newcommand{\R}{\mathbb{R}}
\newcommand{\C}{\mathbb{C}}
\newcommand{\Z}{\mathbb{Z}}
\newcommand{\N}{\mathbb{N}}
\newcommand{\Abs}[1]{\left\lvert #1 \right\rvert}
\newcommand{\tens}{\otimes}
\newcommand{\isomorphic}{\cong}
\newcommand{\uniformlyto}{\rightrightarrows}
\newcommand{\dsum}{\oplus}
\renewcommand{\phi}{\varphi}
\newcommand{\Var}{\text{\textnormal{Var}}}
\let\@@pmod\pmod
\DeclareRobustCommand{\pmod}{\@ifstar\@pmods\@@pmod}
\def\@pmods#1{\mkern4mu({\operator@font mod}\mkern 6mu#1)}
\newcommand\restr[2]{{
		\left.\kern-\nulldelimiterspace 
		#1 
		\vphantom{\big|} 
		\right|_{#2} 
	}}
\begin{document}
\author{Max Zhou}
\address{\noindent Department of Mathematics, Indiana University \\
	Bloomington, IN 47405, USA}
\email{maxzhou@indiana.edu}

\title[Return Probabilities of Random Walks]{Return Probabilities of Random Walks}
\date{December 11, 2015}

\begin{abstract}
	Associated to a random walk on $\Z$ and a positive integer $n$, there is a return probability of the random walk returning to the origin after $n$ steps. An interesting question is when the set of return probabilities uniquely determines the random walk. There are trivial situations where two different random walks have the same return probabilities. However, among random walks in which these situations do not occur, our main result states that the return probabilities do determine the random walk. As a corollary, we will obtain a result dealing with the representation theory of $\mathrm{U}(1)$.  
\end{abstract}

\subjclass[2010]{Primary 60J10; Secondary 22E45}

\keywords{Return probabilities, random walks}

\maketitle

\section{Introduction}

The object of this paper is to first determine when a random walk is determined by its return probabilities, and then deduce a result in the representation theory of $\mathrm{U}(1)$. 

First, we will introduce relevant definitions related to random walks. Let $\{a_n\}$, $n \in \Z$, be a 1-dimensional random walk on $\Z$, where $a_n$ is the probability of moving $n$ steps in one unit of time. In this paper, we will only consider random walks for which
\begin{align*}
\limsup_{n \to \infty} \; a_n^{\enspace 1/n} < 1\;.
\end{align*}
All of our results will implicitly assume this condition. This condition is related to convergence of a Laurent series with coefficients given by the $a_n$.
 A random walk is \textit{\textbf{symmetric}} if $a_n = a_{-n}$ for all $n$. The set of \textit{\textbf{step sizes}} for a random walk $\{a_n\}$ is the set $\{\Abs{n} \in \N: a_n \neq 0 \}$. A random walk is \textit{\textbf{primitive}} if the set of step sizes generates $\Z$, and \textit{\textbf{compactly supported}} if the set of step sizes is finite.  The $k^{th}$ \textit{\textbf{return probability}} of a random walk is the probability that the position at time $k$ is $0$, i.e. $\mathrm{Pr}[x_1 + x_2 + \cdots + x_k = 0]$, where the $x_i$ are i.i.d discrete random variables with $\mathrm{Pr}[x_i = n] = a_n$.    

In general, a random walk is not uniquely determined by its return probabilities. 
For example, a random walk has all return probabilities 0 if $a_n = 0$ for $n \leq 0$ or $n \geq 0$. Also, if $\{a_n\}$ is a random walk and $c \in \Z \setminus \{0\}$, then the random walk $\{b_n\}$ given by $b_n = a_{c n}$ has the same return probabilities, but in general is not the same random walk. These situations can be avoided if we require that  $\{a_n\}$ and $\{b_n\}$ be symmetric and primitive.

Our main theorem is that under these hypotheses, a random walk is uniquely determined by its return probabilities: 

\begin{theorem}
	\label{thm:main}
	If $\{a_n\}$ and $\{b_n\}$ are symmetric, primitive random walks on $\Z$ whose return probabilities are all equal, then $\{a_n\} = \{b_n\}$.  
\end{theorem}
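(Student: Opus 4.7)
The plan is to reformulate the equality of return probabilities as an equality of pushforward measures on $\R$ under the ``characteristic function'' of each walk, and then to use primitivity together with real-analyticity on the unit circle to promote this measure-theoretic identity to a pointwise equality of functions.

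To each walk I would associate its Laurent series $f_a(z) = \sum_n a_n z^n$, which by the hypothesis $\limsup a_n^{1/n} < 1$ converges to a holomorphic function on an annulus $r < |z| < r^{-1}$ with $r < 1$. Setting $z = e^{i\theta}$, $f_a$ is bounded, real-analytic, real-valued, and even in $\theta$ (by symmetry), and satisfies $f_a(0) = 1$ and $|f_a(\theta)| \le 1$. A direct expansion of $f_a^k$ and extraction of the constant Laurent coefficient gives
\begin{equation*}
p_k := \mathrm{Pr}[x_1 + \cdots + x_k = 0] = \frac{1}{2\pi}\int_0^{2\pi} f_a(\theta)^k\,d\theta,
\end{equation*}
and analogously for $f_b$. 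Let $\mu_a$ and $\mu_b$ denote the pushforwards of $\frac{1}{2\pi}\,d\theta$ under $f_a$ and $f_b$; these are probability measures compactly supported in $[-1,1]$ whose $k$-th moments equal $p_k$. Since a compactly supported probability measure on $\R$ is determined by its moments (Weierstrass approximation), the hypothesis forces $\mu_a = \mu_b$.

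Next I would use primitivity to distinguish a canonical point on the circle. The equation $f_a(\theta) = 1$ requires $\cos(n\theta) = 1$ for every step size $n$, and primitivity then forces $\theta \equiv 0 \pmod{2\pi}$, so $\theta = 0$ is the unique global maximum of $f_a$. Because $f_a''(0) = -\sum_n n^2 a_n < 0$, I may choose $\delta > 0$ so that $f_a$ is strictly decreasing on $(0,\delta)$; together with symmetry, this shows that for $y$ sufficiently close to $1$ the super-level set $\{f_a > y\}$ is a symmetric interval $(-\theta_a(y), \theta_a(y))$, and similarly for $f_b$ with a function $\theta_b(y)$. In this regime $\mu_a((y,1]) = \theta_a(y)/\pi$, so $\mu_a = \mu_b$ yields $\theta_a(y) = \theta_b(y)$ for $y$ near $1$; inverting gives $f_a = f_b$ on an interval $(0, \delta')$.

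Finally, because $f_a$ and $f_b$ extend to holomorphic functions on an annulus containing the unit circle, the identity principle upgrades this local agreement to $f_a \equiv f_b$ throughout the annulus, and uniqueness of Laurent coefficients then gives $a_n = b_n$ for all $n$. I expect the main obstacle to be precisely the middle step: bridging from $\mu_a = \mu_b$ (a purely distributional identity that in general does \emph{not} determine the underlying function) to a pointwise agreement of $f_a$ and $f_b$. The primitivity hypothesis is essential here, since without it the global maximum of $f_a$ need not be attained at a unique point, and one can easily construct distinct symmetric walks whose characteristic functions are equidistributed but unequal.
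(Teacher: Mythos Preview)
Your argument is correct and takes a cleaner route than the paper's. Both proofs establish the integral representation $p_k = \frac{1}{2\pi}\int f_a(\theta)^k\,d\theta$, use primitivity to pin down $\theta = 0$ as the unique point on $(-\pi,\pi]$ with $f_a(\theta) = 1$, and finish by analytic continuation once a local identity $f_a = f_b$ near $0$ has been secured. The difference lies in how that local identity is extracted. The paper proceeds by a Laplace-type asymptotic analysis: it splits into the cases of an even step size versus all odd step sizes, changes variables to express the return probabilities (up to exponentially small error) as $\int_{1-\eta}^1 y^n (f_a^{-1})'(y)\,dy$, and then invokes beta-function asymptotics to argue by contradiction that $(f_a^{-1})' = (f_b^{-1})'$ on $(1-\eta,1)$. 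Your pushforward-measure argument replaces all of this with a single observation: the $p_k$ are exactly the moments of the compactly supported probability measure $\mu_a = (f_a)_*(d\theta/2\pi)$ on $[-1,1]$, so equal return probabilities force $\mu_a = \mu_b$ outright, and reading off the distribution function near the right endpoint recovers the local inverse of $f_a$ near $1$. This sidesteps the even/odd step-size dichotomy entirely---you only need to know the unique location of the global \emph{maximum}, not of all points where $|f_a| = 1$---and eliminates the asymptotic estimates. The paper's approach is more hands-on and produces explicit exponential decay rates as a byproduct, but yours is conceptually more direct and shorter.
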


As a corollary, we obtain a result in the representation theory of $\mathrm{U}(1)$: 

\begin{corollary}
	\label{cor:rep_theory}
	Let $\rho: \mathrm{U}(1) \to \mathrm{GL}(V)$ be a complex, finite-dimensional, self-dual, faithful representation of $\mathrm{U}(1)$. Then, $\rho$ is uniquely determined by the function 
	\begin{align*}
	n \mapsto \dim (V^{\tens n})^{\mathrm{U}(1)}
	\end{align*}
\end{corollary}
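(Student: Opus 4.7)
The plan is to reduce Corollary \ref{cor:rep_theory} to Theorem \ref{thm:main} by translating the representation-theoretic data into random-walk data. Since $\mathrm{U}(1)$ is compact abelian and $V$ is finite-dimensional, decompose $V = \bigoplus_{k \in \Z} V_k$ into character subspaces, where $\rho(z)$ acts on $V_k$ as multiplication by $z^k$. Let $a_k := \dim V_k$; this is a finitely-supported sequence of non-negative integers summing to $d := \dim V$. Self-duality $V \isomorphic V^{*}$ forces $a_k = a_{-k}$, while faithfulness of $\rho$ is equivalent to the set $\{|k| : a_k \neq 0,\ k \neq 0\}$ having greatest common divisor one, i.e.\ generating $\Z$. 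Hence $p_k := a_k/d$ defines a symmetric, primitive, compactly supported random walk on $\Z$, which satisfies the paper's standing convergence hypothesis trivially.

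Next, I would express $f(n) := \dim(V^{\tens n})^{\mathrm{U}(1)}$ in terms of the return probabilities of $\{p_k\}$. The character of $V$ is $\chi_V(z) = \sum_k a_k z^k$, and $\dim(V^{\tens n})^{\mathrm{U}(1)}$ is the multiplicity of the trivial character in $V^{\tens n}$, i.e.\ the constant coefficient of the Laurent polynomial $\chi_V(z)^n$. Writing $P(z) := \chi_V(z)/d = \sum_k p_k z^k$, this constant coefficient is precisely the $n$-th return probability $r_n$ of $\{p_k\}$, by the very definition of $r_n$ as the probability that a sum of $n$ i.i.d.\ draws from $\{p_k\}$ equals $0$. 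Hence
\begin{equation*}
f(n) \;=\; d^{\,n}\, r_n.
\end{equation*}

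It remains to recover $d$, and thus each $r_n$, from the function $f$ alone, and then invoke Theorem \ref{thm:main}. The function $P$ is real-valued on $|z| = 1$ (by symmetry), satisfies $|P(e^{i\theta})| \le 1$, and has $P(1) = 1$; a Laplace-type lower bound on a small neighborhood of $\theta = 0$ shows that $r_n$ cannot decay exponentially, which combined with $r_n \le 1$ yields $\limsup_n r_n^{1/n} = 1$, and therefore $d = \limsup_n f(n)^{1/n}$. Once $d$ is extracted, each $r_n = f(n)/d^n$ is known, and Theorem \ref{thm:main} uniquely determines the symmetric primitive walk $\{p_k\}$, whence $\{a_k\} = \{d\, p_k\}$, whence $\rho$ up to isomorphism. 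The main obstacle I anticipate is this last extraction of $d$ from $f$; the rest of the argument is formal character theory for $\mathrm{U}(1)$.
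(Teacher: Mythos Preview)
Your proposal is correct and follows essentially the same architecture as the paper: translate $V$ into a symmetric primitive random walk via $p_k=\dim V_k/\dim V$, identify $f(n)=d^n r_n$, recover $d$ from the asymptotics of $f(n)^{1/n}$, and then apply Theorem~\ref{thm:main}. The only point where you diverge is in justifying $\limsup_n r_n^{1/n}=1$: you sketch a Laplace-method lower bound near $\theta=0$, whereas the paper proves this separately (Proposition~\ref{prop:rp_asymptotics}) via Chebyshev's inequality and Cauchy--Schwarz on the hitting probabilities; both arguments give the needed polynomial (non-exponential) lower bound on $r_{2n}$, and taking the $\limsup$ along even $n$ handles the Type~2 case where odd return probabilities vanish.
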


\section{Proof of Theorem \ref{thm:main}}

First, it is clear that for any compact supported random walk $\{a_n\}$, the $n^{th}$ return probability is the constant coefficient of the Laurent polynomial $\left( \sum_{k = - \infty}^{ \infty} a_k t^k \right)^n$. We may also obtain this coefficient as the constant term of a Fourier series by substituting $t = e^{i x}$ and integrating from $0$ to $2 \pi$. To deal with the case where infinitely many of the $a_n$ are non-zero, we first   determine when the generating functions are analytic:

\begin{proposition}
	\label{prop:analyticity}
	Let $\{a_n\}$ be a random walk, and let $g(t) = \sum_{k = - \infty}^{\infty} a_k t^k$. Then, $g(t)$ is complex analytic in some annulus, $A = \{z \in \C: a < \Abs{z} < b \}$, where $a < 1 < b$. Further, $f(x) := g(e^{ix}) = \sum_{- \infty}^{\infty} a_k e^{i k x}$ is complex analytic in some neighborhood of $0$ in $\C$. Finally, $f_m(x) := \sum_{k = - m}^{m} a_k e^{ikx}$ converges normally to $f(x)$ on this neighborhood. 
\end{proposition}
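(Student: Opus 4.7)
The plan is to extract exponential decay from the hypothesis and then treat the positive and negative parts of the Laurent series separately as ordinary power series. Interpreting the condition $\limsup_{n\to\infty} a_n^{1/n} < 1$ to apply as $\abs{n} \to \infty$, I can fix some $r \in (0,1)$ with $a_n \leq r^{\abs{n}}$ for all sufficiently large $\abs{n}$ (and note $a_n$ is bounded by 1 anyway since it's a probability).

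First I would establish the annulus claim. Split $g(t) = \sum_{k \geq 0} a_k t^k + \sum_{k > 0} a_{-k} t^{-k}$. The first sum is a power series in $t$ with radius of convergence at least $1/r > 1$ by the root test, hence analytic on $\abs{t} < 1/r$. The second is a power series in $t^{-1}$ with the same radius, hence analytic on $\abs{t} > r$. Setting $a = r$ and $b = 1/r$, both sums converge absolutely on $A = \{r < \abs{z} < 1/r\}$, and analyticity of each summand gives analyticity of $g$.

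Next I would pass to $f$ via the substitution $t = e^{ix}$. For $x \in \C$, the modulus $\abs{e^{ix}} = e^{-\Im x}$ lies in $A$ precisely when $\abs{\Im x} < \log(1/r)$. This horizontal strip is open in $\C$ and contains $0$, so $f(x) = g(e^{ix})$ is analytic there as the composition of analytic maps. For the normal convergence statement, I would apply the Weierstrass $M$-test directly: on any compact subset of the strip, $\abs{\Im x}$ is bounded above by some $R < \log(1/r)$, and then
\begin{align*}
\Abs{a_k e^{ikx}} \;\leq\; a_k\, e^{\abs{k}\, R} \;\leq\; (r e^{R})^{\abs{k}}
\end{align*}
for $\abs{k}$ large. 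Since $re^R < 1$, the bound is summable over $k \in \Z$, giving uniform absolute convergence of $f_m \to f$ on the chosen compact set, which is the standard definition of normal convergence.

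I do not expect a serious obstacle here; the only point needing any care is being explicit about which neighborhood of $0$ we work on, and correctly translating the exponential decay of the coefficients into a uniform bound after substituting $e^{ix}$. The mild subtlety is making sure the hypothesis $\limsup a_n^{1/n} < 1$ is used in both tail directions, which is what one needs to get an honest annulus (as opposed to just a disc) around the unit circle.
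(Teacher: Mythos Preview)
Your argument is correct and follows essentially the same route as the paper: split the Laurent series into its two tails, apply the root test to each to obtain an annulus $\{c<\Abs{z}<1/c\}$ containing the unit circle, compose with $e^{ix}$ to get analyticity of $f$ on a neighborhood of $0$, and invoke the Weierstrass $M$-test for normal convergence. The only cosmetic differences are that you describe the neighborhood explicitly as the strip $\Abs{\Im x}<\log(1/r)$ and run the $M$-test directly on the Fourier partial sums, whereas the paper first establishes normal convergence of $\sum_{-m}^{m} a_k t^k$ to $g$ on the annulus and then pulls it back through $t=e^{ix}$.
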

\begin{proof}
	For the first point, note that $g(t) = \sum_{k = - \infty}^{\infty} a_k t^k = g_0(t) + g_1(t)$, where $g_0(t) = \sum_{k = - \infty}^{-1} a_k t^k$ and $g_1(t) = \sum_{k = 0}^{\infty} a_k t^k$. Let $\limsup_{n \to \infty} \; a_n^{\enspace 1/n} < c < 1$, for some $c > 0$. It follows from the root test that $g_0(t)$ converges absolutely on the neighborhood of infinity $\{z \in \C : \Abs{z} > c\}$ and $g_1(t)$ converges absolutely on the open disk $\{z \in \C : \Abs{z} < 1/c \}$. It follows from an application of the Weierstrass $M$-test that the partial sums of both $g_0(t)$ and $g_1(t)$ converge normally to their respective functions in their respective regions of convergence. Hence, $g_0(t)$ and $g_1(t)$ are analytic in their regions of convergence and $\sum_{k = - m}^{m} a_k t^k$ converges normally to $g(t)$ in the annulus $A = \{z \in \C : c < \Abs{z} < 1 / c \}$. Thus, $g(t) = g_0(t) + g_1(t)$ is analytic in $A $. As $c < 1$, then $c < 1 < 1 / c$. 
	
	For the second point, it is clear there is some neighborhood, $U$, of $0$, where $e^{ix}$ maps $U$ into $A$. Thus, the chain rule implies that $f(x) = g(e^{ix}) =  \sum_{- \infty}^{\infty} a_k e^{i k x}$ is complex analytic in $U$. Normal convergence of $\sum_{k = - m}^{m} a_k t^k$ to $g(t)$ implies normal convergence of $f_m(x) := \sum_{k = - m}^{m} a_k e^{ikx}$ to $f(x) = g(e^{ix})$ for $x \in U$.
\end{proof}

Now, we can show the integral that gives return probabilities for compactly supported random walks also gives return probabilities for all random walks:

\begin{lemma}
	\label{lem:fourier}
	Let $\{a_n\}$ be a random walk, and let $f(x) = \sum_{k = - \infty}^{ \infty} a_k e^{i k x}$. If $\{a_n\}$ is a symmetric random walk, then for $x \in \R$, $f(x) \in \R$ and $\Abs{f(x)} \leq 1$. The $n^{th}$ return probability of $\{a_n\}$, $c_n$, is given by: 
	\begin{align*}
	c_n = \frac{1}{2 \pi} \int_{- \pi}^{\pi} f(x)^n \; dx
	\end{align*}
\end{lemma}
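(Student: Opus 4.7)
The first claim follows almost immediately from symmetry. Pairing the $k$ and $-k$ terms gives $f(x)=a_0+2\sum_{k\ge 1}a_k\cos(kx)$, which is real-valued on $\R$. The bound $|f(x)|\le 1$ then follows from the triangle inequality together with $a_k\ge 0$ and $\sum_{k\in\Z}a_k=1$; the same comparison shows the series defining $f$ converges absolutely and uniformly on all of $\R$ (so we actually get much more than the local analyticity supplied by Proposition~\ref{prop:analyticity}).

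For the integral formula, my plan is to reduce to the compactly supported case by truncation. Let $f_m(x)=\sum_{|k|\le m}a_k e^{ikx}$. Then $f_m(x)^n$ is a Laurent polynomial in $e^{ix}$, so the opening remark of the section (the compactly supported case) gives
\begin{align*}
\frac{1}{2\pi}\int_{-\pi}^{\pi}f_m(x)^n\,dx=\!\!\!\sum_{\substack{k_1+\cdots+k_n=0\\|k_i|\le m}}\!\!\! a_{k_1}a_{k_2}\cdots a_{k_n}.
\end{align*}
The right-hand side is a partial sum of non-negative terms, so by the monotone convergence theorem it increases to $\sum_{k_1+\cdots+k_n=0}a_{k_1}\cdots a_{k_n}$, which by definition is $c_n$.

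On the left-hand side, I need $\frac{1}{2\pi}\int f_m^n\to\frac{1}{2\pi}\int f^n$. Since $\sum |a_k|=1<\infty$, one has $f_m\to f$ uniformly on $\R$, and both $|f_m|$ and $|f|$ are bounded by $1$. The algebraic identity $u^n-v^n=(u-v)(u^{n-1}+u^{n-2}v+\cdots+v^{n-1})$ then gives $|f_m^n-f^n|\le n\|f_m-f\|_\infty$, so $f_m^n\to f^n$ uniformly on $[-\pi,\pi]$, and the integrals converge. Equating the two limits yields the formula.

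The only mildly delicate point is justifying the passage to the limit on the integral side, but once one notices that absolute summability of $\{a_k\}$ gives uniform (not just local) convergence of $f_m$ to $f$ on the real line, the argument is routine. Alternatively, one can skip the truncation entirely and justify interchanging sum and integral in $\int\bigl(\sum a_k e^{ikx}\bigr)^n dx$ directly by Fubini, since $\sum_{k_1,\dots,k_n}a_{k_1}\cdots a_{k_n}=1$.
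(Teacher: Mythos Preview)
Your proof is correct and follows essentially the same truncation strategy as the paper: show $f_m\to f$ uniformly so that $\int f_m^n\to\int f^n$, and separately show the truncated integral equals a partial return probability that converges to $c_n$. The only cosmetic differences are that you justify the latter by monotonicity of non-negative partial sums (the paper instead bounds $c_n-c_{m,n}\le n\,\Pr[|x_1|>m]$), and you make the uniform convergence of $f_m^n$ explicit via the factorization $u^n-v^n=(u-v)\sum u^{n-1-j}v^j$ rather than just citing uniform boundedness.
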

\begin{proof}
	Let $f_m(x) = \sum_{k  = -m}^{m} a_k e^{i k x}$. It follows from the convergence of $\sum_{k = - \infty}^{\infty} a_k$ and the Weierstrass $M$-test that $f_m(x) \uniformlyto f(x)$ on $[- \pi, \pi]$.
	
	For the first point, if $\{a_n\}$ is symmetric, then $f_m(x) = a_0 + 2 \sum_{k = 1}^{m} a_k \cos(k x)$.  As $f_m(x)$ is real valued for every $m$, then $f(x)$ is also real valued.
	
	Next, it follows from an easy application of the triangle inequality that for every $m \in \N$ and $x \in \R$, $\Abs{f_m(x)} \leq \sum_{k = - m}^{m} a_k \leq 1$. Letting $m \to \infty$, then $\Abs{f(x)} \leq 1$.
	
	For the last point, let
	\begin{align*}
	c_{m, n} = \frac{1}{2 \pi} \int_{- \pi}^{\pi} f_m(x)^n \; dx
	\end{align*}
	
	Since $c_{m, n}$ is also the constant coefficient of $( \sum_{k = - m}^{m} a_k t^k )^n$, then $c_{m, n} = \mathrm{Pr}[x_1 + \cdots + x_n = 0 \text{ and } \Abs{x_i} \leq m \text{ for all } i]$, where the $x_i$ are i.i.d. discrete random variables with $\mathrm{Pr}[x_i = n] = a_n$.
	
	Now, we will show that, for every $n$, $\lim\limits_{m \to \infty} c_{m, n} = c_n$. First, observe the following inequalities: 
	\begin{align*}
	c_n - c_{m, n} & = \mathrm{Pr}[x_1 + \cdots + x_n = 0 \text{ and } \Abs{x_i} > m \text{ for some } i]
	\\ & \leq \sum_{i = 1}^{n} \mathrm{Pr}[\Abs{x_i} > m]
	\\ &= n \cdot \mathrm{Pr}[\Abs{x_1} > m]
	\end{align*}
	When $n$ is fixed and $m \to \infty$, then the last expression goes to 0. Hence, $\lim\limits_{m \to \infty} c_{m, n} = c_n$. 
	
	On the other hand, since $f_m(x) \uniformlyto f(x)$ on $[- \pi, \pi]$ (and hence the $f_m(x)$ are uniformly bounded), then for every $n$, $f_m(x)^n \uniformlyto f(x)^n$ on $[- \pi, \pi]$. Thus, we may interchange limit and integral, and the following equalities prove the second point: 
	\begin{align*}
	c_n = \lim\limits_{m \to \infty} c_{m, n} 
	= \lim\limits_{m \to \infty} \frac{1}{2 \pi} \int_{- \pi}^{\pi} f_m(x)^n \; dx 
	= \frac{1}{2 \pi} \int_{- \pi}^{\pi} \lim\limits_{m \to \infty}  f_m(x)^n \; dx 
	= \frac{1}{2 \pi} \int_{- \pi}^{\pi} f(x)^n \; dx 
	\end{align*}
\end{proof}

As $n$ becomes large, only the neighborhoods of points where $\Abs{f(x)} = 1$ contribute significantly to the integral in Lemma \ref{lem:fourier}. When $\{a_n\}$ is a primitive random walk, only the obvious points satisfy this condition: 

\begin{lemma}
	\label{lem:extrema}
	Let $\{a_n\}$ be a symmetric, primitive random walk. Let $f(x) = \sum_{k = - \infty}^{ \infty} a_k e^{i k x} = a_0 + 2 \sum_{k = 1}^{\infty} a_k \cos(k x)$. If there is an even step size, then $\Abs{f(x)} = 1$ only when $x = 2 \pi k$ for some $k \in \Z$. On the other hand, if the step sizes of $\{a_n\}$ are all odd, then $\Abs{f(x)} = 1$ only when $x = m \pi$ for some $m \in \Z$.
\end{lemma}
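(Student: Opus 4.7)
The plan is to reduce $|f(x)| = 1$ to the two real cases $f(x) = \pm 1$, since Lemma \ref{lem:fourier} tells us $f$ is real-valued with $|f(x)| \leq 1$. Using $\sum_n a_n = 1$ together with $a_n \geq 0$ and symmetry, I would analyze each sign case as an equality condition in a sum of non-negative terms.

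For $f(x) = 1$, I would subtract the identity $1 = a_0 + 2\sum_{k\geq 1} a_k$ to obtain
\begin{align*}
0 = 1 - f(x) = 2 \sum_{k \geq 1} a_k \bigl(1 - \cos(kx)\bigr).
\end{align*}
Since every summand is non-negative, equality forces $\cos(kx) = 1$, i.e.\ $kx \in 2\pi\Z$, for every step size $k$. By primitivity, the step sizes have gcd $1$, so I can pick integers $n_i$ and step sizes $k_i$ with $\sum n_i k_i = 1$; then $x = \sum n_i (k_i x) \in 2\pi \Z$. This handles the $f(x) = 1$ case regardless of parity assumptions.

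For $f(x) = -1$, I would instead add the identities to obtain
\begin{align*}
0 = 1 + f(x) = 2 a_0 + 2 \sum_{k \geq 1} a_k \bigl(1 + \cos(kx)\bigr),
\end{align*}
which forces $a_0 = 0$ and $\cos(kx) = -1$, i.e.\ $kx/\pi$ is an odd integer, for every step size $k$. The same primitivity trick as above gives $x/\pi = \sum n_i (k_i x / \pi) \in \Z$, so $x \in \pi \Z$ in this case. The key additional observation is that if some step size $k_0$ is \emph{even}, then $k_0 \cdot (x/\pi)$ would be even, contradicting that $k_0 x/\pi$ is odd; hence $f(x) = -1$ cannot occur when there is an even step size.

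Combining the two cases yields exactly the lemma: if there is an even step size, only $f(x) = 1$ is possible, forcing $x \in 2\pi\Z$; if all step sizes are odd, both cases are possible but each forces $x \in \pi\Z$. The main subtlety is the $f(x) = -1$ analysis, where one must both extract $a_0 = 0$ from the non-negative-sum argument and use primitivity twice — once to see $x/\pi \in \Z$ and once (implicitly through parity) to rule out the even-step-size scenario. The $f(x) = 1$ case is essentially immediate from primitivity.
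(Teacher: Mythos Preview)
Your proof is correct and follows the same overall structure as the paper's: reduce $|f(x)|=1$ to $f(x)=\pm 1$ by realness, use the equality condition in the non-negative sum to force $\cos(kx)=\pm 1$ for every step size $k$, and then invoke primitivity. The difference lies in how the primitivity step is carried out. You apply B\'ezout directly---writing $1=\sum_i n_i k_i$ and reading off $x=\sum_i n_i(k_i x)\in 2\pi\Z$ (respectively $x/\pi=\sum_i n_i(k_i x/\pi)\in\Z$)---whereas the paper solves each equation for $x=2\pi a_i/k_i$, equates the resulting fractions, reduces to coprime numerator and denominator, and argues by divisibility, handling the single-step-size walk as a separate sub-case. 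Your B\'ezout route is shorter and avoids that case split; the paper's fraction argument, by contrast, extracts the slightly sharper conclusion that in the all-odd case $f(x)=-1$ forces $x$ to be an \emph{odd} multiple of $\pi$, though the lemma as stated does not require this refinement.
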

\begin{proof}
	When there is an even step size, then $f(x)$ can never be $-1$. For if it was, then for each step size $k_i$, $\cos(k_i x) = -1$. Suppose that $k_1$ is odd, while $k_2$ is even. Then, $\cos(k_1 x) = -1$ when $x = (\pi + 2 \pi a_1)/ k_1$ for some $a_1 \in \Z$. If $k_2 = 0$, then $\cos(k_2 x) = -1$ cannot happen, and we are done. If $k_2 \neq 0$, then similarly from $\cos(k_2 x) = -1$, we have $x = (\pi + 2 \pi a_2) / k_2$ for some $a_2 \in \Z$. Equating both expressions for $x$ and simplifying, we have $k_1 \pi + 2 \pi a_2 k_1 = k_2 \pi + 2 \pi a_1 k_2$. However, the left hand side is an odd multiple of $\pi$, while the right hand side is an even multiple of $\pi$, a contradiction. 
	
	Next, whenever the step sizes generate $\Z$, $f(x) = 1$ only when $x$ is a multiple of $2 \pi$. Again, noting that for each non-zero step size $k_i$, $\cos(k_i x) = 1$, we obtain the equations $x = 2 \pi a_i / k_i$ for each $i$, so then all of the expressions of the form $a_i / k_i$ are equal. If there is one $k_j$ that does not divide $a_j$, then none of the $k_i$'s divide $a_i$. In this case, we may reduce these fractions to assume that $(a_i, k_i) = 1$, while still assuming that the $k_i$ generate $\Z$. Then, for any $i, j$, $a_i k_j = k_i a_j$. The conditions $(a_i, k_i) = 1$ and $(a_j, k_j) = 1$ imply that $a_i \mid a_j$ and $a_j \mid a_i$, respectively. Hence, all of the $a_i$'s are the same, which implies all of the $1 / k_i$ are equal. This clearly cannot happen if the $k_i$ are relatively prime, unless $\{a_n\}$ has only one step size. Thus, when $\{a_n\}$ has more than one step size, $k_i \mid a_i$ for every $i$, and $x$ is a multiple of $2 \pi$.  If $\{a_n\}$ has only one step size, it follows from symmetry and primitivity that $a_{1} = a_{-1} = 1/2$ and $a_n = 0$ for all other $n$. Then, $f(x) = e^{- i x} / 2 + e^{i x} / 2 = \cos(x)$, which clearly is only $1$ when $x = 2 \pi$. 
	
	Finally, if all of the step sizes are odd, then $f(x) = -1$ only when $x$ is an odd multiple of $\pi$. For each non-zero step size $k_i$, $\cos(k_i x) = -1$, so then $k_i x = \pi + 2 \pi a_i$ for some $a_i \in \Z$. It follows that $x = \pi (1 + 2 a_i) / k_i$, and hence expressions of the form $(1 + 2 a_i) / k_i$ are equal. Using the same logic as in the previous case, we see that it must be that $k_i \mid (1 + 2 a_i)$ for all $i$. As $x = \pi(1 + 2  a_i) / k_i$, then $x$ must be an odd multiple of $\pi$.  
\end{proof}

The dichotomy of $\{a_n\}$ either having either only odd step sizes or having an even step size will persist throughout the several of the following results. For brevity, we make the following definitions:

\begin{definition}
	A symmetric, primitive random walk is \textit{\textbf{Type 1}} if it has an even step size, and \textit{\textbf{Type 2}} if it has an odd step size. 
\end{definition}

In particular, the next result shows it is possible to determine if $\{a_n\}$ is Type 1 or Type 2 from its return probabilities.

\begin{lemma}
	\label{lem:odd_steps}
	A random walk $\{a_n\}$ is Type 2 if and only if every odd return probability is 0.
\end{lemma}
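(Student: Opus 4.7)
The plan is to argue both directions directly from the combinatorial definition $c_n = \mathrm{Pr}[x_1 + \cdots + x_n = 0]$, where the $x_i$ are i.i.d.\ with $\mathrm{Pr}[x_i = k] = a_k$, using a simple parity argument; the Fourier-analytic machinery of Lemma \ref{lem:fourier} is not needed here.

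For the forward implication, suppose $\{a_n\}$ is Type 2, so every step size is odd; in particular $a_0 = 0$, and each $x_i$ is almost surely an odd integer. Then $x_1 + \cdots + x_n$ has the same parity as $n$, so for odd $n$ the sum is odd and hence nonzero, giving $c_n = 0$.

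For the converse I would prove the contrapositive: if there is an even step size then some odd return probability is nonzero. Split into two subcases. If $a_0 \neq 0$, then $c_1 = \mathrm{Pr}[x_1 = 0] = a_0 > 0$ and we are done. Otherwise $a_0 = 0$ but some $a_{2m} \neq 0$ with $m \geq 1$. By primitivity the step sizes generate $\Z$, which forces at least one odd step size $j$ with $a_j \neq 0$ (a collection of even integers alone generates only an even subgroup); by symmetry also $a_{-j} \neq 0$. Now take $n = 2m + j$, which is odd, and consider the event that the first $j$ of the $x_i$ equal $2m$ and the remaining $2m$ of them equal $-j$. The corresponding sum is $2m \cdot j + (-j) \cdot 2m = 0$, and this event has probability $a_{2m}^{j}\, a_{-j}^{2m} > 0$, so $c_n > 0$.

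The reasoning is essentially combinatorial and I do not expect a serious obstacle. The only mild point requiring a moment's care is the application of primitivity in the second subcase to guarantee the existence of an odd step size once $a_0 = 0$ and some even step is present; this is immediate from the fact that any subset of even integers generates only an even subgroup of $\Z$.
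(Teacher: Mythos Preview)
Your argument is correct and follows essentially the same route as the paper: a parity observation for the forward direction, and for the converse the contrapositive via an explicit odd-length returning path built from one even and one odd step. Your case split ($a_0\neq 0$ versus $a_0=0$ with a nonzero even step $2m$) is a minor reorganization of the paper's, and your use of primitivity to produce an odd step size in the second subcase is exactly the missing ingredient the paper also invokes implicitly.
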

\begin{proof}
	$(\Rightarrow)$: If $a_n$ has only odd step sizes, then any sum of an odd number of these step sizes is odd, and hence cannot be 0. 
	
	$(\Leftarrow)$: It is equivalent to show the contrapositive: if $\{a_n\}$ has an even step size, then there is a non-zero odd return probability. Either $a_0$ is the only non-zero coefficient of an even step size, or it is possible to pick $a_{k}$ and $a_{j}$, where $k$ and $j$ have different signs and different parity. In the first case, any return probability is non-zero. In the second case, $k + j$ is odd and the $(k + j)^{th}$ return probability is non-zero, as it is possible to take $j$ steps $\Abs{k}$ times, followed by $k$ steps $\Abs{j}$ times. 
\end{proof}

We will change variables in the integral in Lemma \ref{lem:fourier} to relate the return probabilities of a random walk with the moments of $(f^{-1})'(y)$ on a half-neighborhood of $1$. 

First, we will describe $f^{-1}(y)$ and its derivative: 

\begin{proposition}
	\label{prop:h_function}
	If $\{a_n\}$ is a symmetric random walk, then $f(x) = \sum_{k = - \infty}^{ \infty} a_k e^{i k x}$ can be written as $f(x) = h(x^2)$, where $h(x)$ is complex analytic in a neighborhood of 0 in $\C$, $h(0) = 1$, and $h'(0) < 0$. It follows that $f^{-1}(y)$ is defined on $[1 - \eta, 1]$ and differentiable in $(1 - \eta, 1)$, for some $\eta > 0$. Further, $(f^{-1})'(y) = (1 - y)^{- 1/ 2} q(y)$, where $q(y)$ is complex analytic in a neighborhood of $1$.
\end{proposition}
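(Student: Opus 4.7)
The approach is to use the symmetry $a_k = a_{-k}$ to rewrite $f$ as a function of $x^2$, then apply the analytic inverse function theorem followed by a local factorization to extract the claimed $(1-y)^{-1/2}$ singularity.

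First I would use symmetry to write $f(x) = a_0 + 2 \sum_{k=1}^{\infty} a_k \cos(kx)$. Each $\cos(kx)$ is a power series in $x^2$, and by Proposition \ref{prop:analyticity} the full series converges normally near $0$, so the coefficients may be collected into a single power series in $x^2$. This defines an analytic $h$ near $0$ with $f(x) = h(x^2)$. Evaluating gives $h(0) = f(0) = \sum_k a_k = 1$, and reading off the coefficient of $x^2$ gives $h'(0) = -\sum_{k=1}^{\infty} k^2 a_k$, which is strictly negative for any nontrivial walk (in particular, for any primitive walk, which is the only case used later).

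Next I would invoke the analytic inverse function theorem: since $h'(0) < 0$, there is a neighborhood of $1$ in $\C$ on which $h^{-1}$ exists, is analytic, and satisfies $h^{-1}(1) = 0$ and $(h^{-1})'(1) = 1/h'(0) < 0$. For real $y$ slightly less than $1$, $h^{-1}(y)$ is real and non-negative, because $h$ maps a small interval $[0,\delta] \subset \R$ onto $[1-\eta,1]$ decreasingly. Thus I can define $f^{-1}(y) := \sqrt{h^{-1}(y)}$ (positive branch) on $[1-\eta, 1]$; this genuinely inverts $f$ on this half-neighborhood since $f(\sqrt{h^{-1}(y)}) = h(h^{-1}(y)) = y$.

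Finally, to produce the claimed form, I would factor $h^{-1}(y) = (1-y)\, r(y)$, where $r$ is analytic near $1$ with $r(1) = -(h^{-1})'(1) = -1/h'(0) > 0$. Because $r(1) > 0$, a local analytic square root $p(y) = \sqrt{r(y)}$ exists near $1$, giving $f^{-1}(y) = \sqrt{1-y}\,p(y)$. Differentiating,
\[
(f^{-1})'(y) = -\frac{p(y)}{2\sqrt{1-y}} + \sqrt{1-y}\, p'(y) = (1-y)^{-1/2}\left[-\tfrac{1}{2} p(y) + (1-y) p'(y)\right],
\]
so the claim holds with $q(y) = -\tfrac{1}{2}p(y) + (1-y)p'(y)$, which is analytic in a neighborhood of $1$.

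The main obstacle is purely one of bookkeeping with branches: one must verify that $h^{-1}(y) \geq 0$ for real $y \in [1-\eta, 1]$ so that the positive square root is well defined, and that $r(1) > 0$ so that $\sqrt{r(y)}$ has an analytic branch near $1$. Both points follow from the single fact $h'(0) < 0$, which is the key quantitative input; everything else is routine application of the analytic inverse function theorem and the chain rule.
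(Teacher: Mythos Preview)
Your argument is correct and follows essentially the same route as the paper: write $f(x)=h(x^{2})$ with $h(0)=1$, $h'(0)<0$, invert $h$ analytically near $1$, set $f^{-1}(y)=\sqrt{h^{-1}(y)}$, factor $h^{-1}(y)=(1-y)r(y)$ with $r(1)>0$, and differentiate. The only cosmetic differences are that the paper obtains the evenness of the power series by showing the odd Taylor coefficients vanish (rather than via the cosine expansion) and invokes Lagrange/B\"urmann inversion in place of the analytic inverse function theorem, and it leaves the explicit form of $q(y)$ implicit.
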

\begin{proof}
	Let $f_m(x) = \sum_{k = -m}^{m} a_k e^{i k x}$. From Proposition \ref{prop:analyticity}, $f_m(x)$ converges normally to $f(x)$ on some neighborhood of $0$ in $\C$. 
	
	Then, for every $n^{th}$ derivative, $\lim\limits_{m \to \infty} f_m^{(n)}(0) = f^{(n)}(0)$. As $f_m^{(n)}(0) = 0$ for $n$ odd, then $f^{(n)}(0) = 0$ for $n$ odd. Since $f(x)$ is analytic in a neighborhood of 0, then $f(x)$ has a power series expansion in a neighborhood of 0, $f(x) = \sum_{n = 0}^{\infty} \alpha_n x^n$. As $\alpha_{2k} = 0$, then $f(x) = h(x^2)$, where $h(x) = \sum_{n = 0}^{\infty} \alpha_{2n} x^n$ is also analytic in a neighborhood of $0$. For the specific values $h(0) = 1$ and $h'(0) < 0$, consider the following equalities: 
	\begin{align*}
	h(0) &= f(0) = \lim\limits_{m \to \infty} f_m(0) = \lim\limits_{m \to \infty} \sum_{k = - m}^{m} a_k = 1
	\\ h'(0) &= \alpha_2 = \frac{f''(0)}{2} = \lim\limits_{m \to \infty} \frac{f_m''(0)}{2} = \lim\limits_{m \to \infty} -\frac{1}{2} \sum_{k = -m}^{m} k^2 a_k < 0
	\end{align*}
	
	It follows from the Lagrange inversion theorem, stated as Burmann's theorem in \cite[p.~129]{whit2003}, that $h^{-1}(y)$ is defined and complex analytic near $y = 1$, with $(h^{-1})'(1) < 0$. Hence, we have $f^{-1}(y) = \sqrt{h^{-1}(y)}$, which is real and defined when $y = h(x)$, for $x \in [0, \epsilon]$ for some $\epsilon > 0$. Thus, $f^{-1}(y)$ is defined on $[1 - \eta, 1]$ for $1 - \eta = h(\epsilon)$. As $f^{-1}(1) = 0$, then $(f^{-1})'(y)$ exists on $(1 - \eta, 1)$.
	
	Expanding $h^{-1}(y)$ in a power series near 1, we have $h^{-1}(y) = \sum_{k = 0}^{\infty} b_k (y - 1)^k$, where $b_0 = 0$ and $b_1 = 1 / \alpha_2 < 0$. Hence, $h^{-1}(y) = (1 - y) p(y)$, where $p(y)$ is complex analytic near $y = 1$ and $p(1) = - b_1 > 0$. By making $\eta$ smaller, we may assume that $p(y)$ is complex analytic in $[1 - \eta, 1]$. It follows from $p(1) > 0$ and invertibility of $h^{-1}(y) = (1 - y) p(y)$ on $[1 - \eta, 1]$ that $p(y) > 0$ on $[1 - \eta, 1]$.
	
	For $y \in [1 - \eta, 1]$, $f^{-1}(y) = \sqrt{h^{-1}(y)}= \sqrt{1 -y} \sqrt{p(y)}$, where the square roots are principal square roots. As $p(y) > 0$ on $[1 - \eta, 1]$, then $\sqrt{p(y)}$ is complex analytic on $[1 - \eta, 1]$. Differentiating $f^{-1}(y)$ and rearranging terms produces $q(y)$. 
\end{proof}

%
%

\begin{proposition}
	\label{prop:estimate_1}
	
	Let $\{a_n\}$ be a symmetric, primitive random walk. Let $f(x) = \sum_{k = - \infty}^{ \infty} a_k e^{i k x}$ have an inverse that is continuous in $[1 - \eta, 1]$ and continuously differentiable in $(1 - \eta, 1)$ for some $\eta > 0$. Let $c_n$ be the $n^{th}$ return probability. 
	
	If $\{a_n\}$ is Type 1, then
	\begin{align*}
	c_n + \frac{1}{\pi} \int_{1 - \eta}^{1} y^n \; (f^{-1})' (y) \; dy = o(e^{- \mu n})
	\end{align*}
	for some $\mu > 0$.
	
	If $\{a_n\}$ is Type 2, then
	\begin{align*}
	c_{2n} + \frac{2}{\pi} \int_{1 - \eta}^{1} y^{2n} \; (f^{-1})' (y) \; dy = o(e^{- 2\mu n})
	\end{align*}
	for some $\mu > 0$.
\end{proposition}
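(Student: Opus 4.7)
The plan is to apply Lemma \ref{lem:fourier} to express $c_n$ as an integral, localize to small neighborhoods of the points where $|f(x)|=1$ identified in Lemma \ref{lem:extrema}, change variables via $y=f(x)$ using Proposition \ref{prop:h_function}, and show the remaining piece is exponentially small.

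First I would use evenness of $f$ (symmetry) to write $c_n=\frac{1}{\pi}\int_0^\pi f(x)^n\,dx$. By Lemma \ref{lem:extrema}, the subset of $[0,\pi]$ on which $|f(x)|=1$ is $\{0\}$ in the Type 1 case and $\{0,\pi\}$ in the Type 2 case. I would pick $\epsilon>0$ small enough that, by Proposition \ref{prop:h_function}, $f$ restricted to $[0,\epsilon]$ is strictly decreasing (since $f'(x)=2x\,h'(x^2)$ with $h'(0)<0$) and maps bijectively onto $[1-\eta,1]$. By continuity of $|f|$ on the compact complement $K$ of the $\epsilon$-neighborhoods of the critical points, $|f(x)|\leq 1-\delta$ for some $\delta>0$ on $K$, so the contribution of $K$ to the integral is bounded by $\pi(1-\delta)^n$; choosing $\mu>0$ so that $1-\delta<e^{-\mu}$ makes this $o(e^{-\mu n})$.

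Next I would carry out the substitution $y=f(x)$ on $[0,\epsilon]$, which since $f$ is decreasing yields $\int_0^\epsilon f(x)^n\,dx=\int_1^{1-\eta} y^n(f^{-1})'(y)\,dy = -\int_{1-\eta}^1 y^n(f^{-1})'(y)\,dy$. In the Type 1 case, $x=0$ is the only critical point in $[0,\pi]$, so assembling the two pieces gives $c_n=-\frac{1}{\pi}\int_{1-\eta}^1 y^n(f^{-1})'(y)\,dy + o(e^{-\mu n})$, which rearranges to the claimed estimate. Note that the improper integral converges because Proposition \ref{prop:h_function} writes $(f^{-1})'(y)=(1-y)^{-1/2}q(y)$ with $q$ analytic near $1$.

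For Type 2 I also need the neighborhood of $x=\pi$. The key observation is that when every step size of $\{a_n\}$ is odd we have $a_0=0$, so $f(x)=2\sum_{k\text{ odd}}a_k\cos(kx)$, and substituting $x\mapsto x+\pi$ flips every cosine to give $f(x+\pi)=-f(x)$. Hence on $[\pi-\epsilon,\pi]$ the substitution $u=\pi-x$ (together with $f(\pi-u)=-f(-u)=-f(u)$ and evenness) gives $\int_{\pi-\epsilon}^\pi f(x)^{2n}\,dx = \int_0^\epsilon f(u)^{2n}\,du$, exactly the contribution near $0$. Doubling this and combining with the tail estimate yields $c_{2n}=-\frac{2}{\pi}\int_{1-\eta}^1 y^{2n}(f^{-1})'(y)\,dy + o(e^{-2\mu n})$, which is the desired identity. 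The main obstacle I anticipate is the sign bookkeeping in the change of variables (so that the sign in front of $\int_{1-\eta}^1 y^n(f^{-1})'(y)\,dy$ matches the statement) and nailing down $f(x+\pi)=-f(x)$ in the Type 2 case; everything else is routine localization and tail-bound estimation.
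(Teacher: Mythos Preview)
Your proposal is correct and follows essentially the same approach as the paper: localize the Fourier integral from Lemma \ref{lem:fourier} near the points where $|f|=1$ given by Lemma \ref{lem:extrema}, bound the tail by $(1-\delta)^n$, and change variables via $y=f(x)$ using Proposition \ref{prop:h_function}. The only cosmetic difference is that for Type 2 the paper first uses $f(x+\pi)=-f(x)$ to observe $f^{2n}$ is $\pi$-periodic and reduces to $\frac{1}{\pi}\int_{-\pi/2}^{\pi/2} f(x)^{2n}\,dx$ (one critical point), whereas you stay on $[0,\pi]$ and handle the two critical points $0,\pi$ separately via the same identity; the computations are equivalent.
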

\begin{proof}
	From Lemma \ref{lem:fourier}, 
	\begin{align*}
	c_n = \frac{1}{2 \pi} \int_{- \pi}^{\pi} f(x) ^n \; dx
	\end{align*}
	
	Let $\epsilon = f^{-1}(1 - \eta)$.
	
	When $\{a_n\}$ is Type 1, then from Lemma \ref{lem:extrema}, there is some $\alpha < 1$ where $\Abs{f(x)} \leq \alpha$ for $x \in [- \pi, - \epsilon] \cup [\epsilon, \pi]$. 
	
	From the ML-inequality and $f(x)$ being even, the following inequalities hold: 
	\begin{align*}
	\Abs{c_n - \frac{1}{\pi} \int_{0}^{\epsilon} f(x)^n \; dx } 
	= \frac{1}{2 \pi} \Abs{\int_{- \pi}^{- \epsilon} f(x)^n \; dx \; + \; \int_{\epsilon}^{\pi} f(x)^n \; dx } 
	< \alpha^n = e^{\log(\alpha) n}
	\end{align*}
	
	Setting $\mu = - \log(\alpha) / 2$, and changing variables $x = f^{-1}(y)$ to obtain: 
	\begin{align*}
	- \int_{0}^{\epsilon} f(x)^n \; dx =   \int_{1 - \eta}^{1} y^n (f^{-1})' (y) \; dy \;,
	\end{align*}
	which proves the case when $\{a_n\}$ is Type 1

	When $\{a_n\}$ is Type 2, then $f(x + \pi) = - f(x)$. Hence, 
	\begin{align*}
	c_n = 
	\begin{dcases*}
	0 & if $n$ is odd \\
	\frac{1}{\pi} \int_{- \pi / 2}^{\pi / 2} f(x)^n \; dx & if $n$ is even
	\end{dcases*}
	\end{align*}
	
	In order to make similar estimates as in the previous case, we restrict attention to only the even return probabilities. It follows from Lemma \ref{lem:extrema} there is some $\alpha < 1$ where $\Abs{f(x)} \leq \alpha$ for $x \in [- \pi / 2, - \epsilon] \cup [\epsilon, \pi / 2]$. From the ML-inequality and $f(x)$ being even, the following inequalities hold:
	\begin{align*}
	\Abs{c_{2n} - \frac{2}{\pi} \int_{0}^{\epsilon} f(x)^{2n} \; dx } 
	= \frac{1}{\pi} \Abs{\int_{- \pi / 2}^{- \epsilon} f(x)^{2n} \; dx \; + \; \int_{\epsilon}^{\pi / 2} f(x)^{2n} \; dx } 
	< \alpha^{2n} = e^{2 \log(\alpha) n}
	\end{align*}
	
	Setting $\mu = - \log(\alpha) / 2$, and changing variables $x = f^{-1}(y)$ to obtain: 
	\begin{align*}
	- \int_{0}^{\epsilon} f(x)^n \; dx =   \int_{1 - \eta}^{1} y^n (f^{-1})' (y) \; dy \;,
	\end{align*}
	which proves the case when $\{a_n\}$ is Type 2.
\end{proof}

The next proposition follows easily from the previous proposition:

\begin{proposition}
	\label{prop:estimate_2}
	Let $\{a_n\}$ and $\{b_n\}$ be symmetric, primitive random walks that have the same return probabilities. Let $f(x) = \sum_{k = - \infty}^{ \infty} a_k e^{i k x}$ and $g(x) = \sum_{k = - \infty}^{\infty} b_k e^{i k x}$ have inverses that are continuous in $[1 - \eta, 1]$ and continuously differentiable in $(1 - \eta, 1)$ for some $\eta > 0$. 
	
	If $\{a_n\}$ and $\{b_n\}$ are both Type 1, then 
	\begin{align*}
		 \int_{1 - \eta}^{1} y^n  \left( (f^{-1}) ' (y) - (g^{-1}) '(y) \right)  \; dy = o(e^{- \mu n}) 
	\end{align*}
	for some $\mu > 0$. 
	
	If $\{a_n\}$ and $\{b_n\}$ are both Type 2, then 
	\begin{align*}
	\int_{1 - \eta}^{1} y^{2n}  \left( (f^{-1}) ' (y) - (g^{-1}) '(y) \right)  \; dy = o(e^{- 2 \mu n}) 
	\end{align*}
	for some $\mu > 0$. 
\end{proposition}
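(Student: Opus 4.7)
The plan is to apply Proposition \ref{prop:estimate_1} to each of $\{a_n\}$ and $\{b_n\}$ separately and then subtract the two resulting asymptotic identities. Because the return probabilities of the two walks are identical, the $c_n$ terms will cancel, leaving exactly an estimate of the desired form on the difference of integrals.

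Concretely, in the Type 1 case Proposition \ref{prop:estimate_1} furnishes constants $\mu_f, \mu_g > 0$ (possibly depending on each walk) with
\begin{align*}
c_n + \frac{1}{\pi} \int_{1-\eta}^{1} y^n (f^{-1})'(y) \, dy &= o(e^{-\mu_f n}), \\
c_n + \frac{1}{\pi} \int_{1-\eta}^{1} y^n (g^{-1})'(y) \, dy &= o(e^{-\mu_g n}).
\end{align*}
Subtracting and multiplying through by $\pi$ yields
\begin{align*}
\int_{1-\eta}^{1} y^n \bigl( (f^{-1})'(y) - (g^{-1})'(y) \bigr) \, dy = o(e^{-\mu n})
\end{align*}
with $\mu = \min(\mu_f, \mu_g) > 0$, which is the claim. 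The Type 2 case is handled in exactly the same way, using the $o(e^{-2\mu n})$ bound on even-indexed return probabilities from Proposition \ref{prop:estimate_1} and again taking the smaller of the two decay rates.

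There is essentially no obstacle beyond bookkeeping. The only points worth checking are: (i) the common value of $\eta$ in the two applications of Proposition \ref{prop:estimate_1} is already built into the hypothesis of the present proposition; (ii) the sum of two $o(e^{-\mu_f n})$ and $o(e^{-\mu_g n})$ error terms is $o(e^{-\mu n})$ for $\mu = \min(\mu_f, \mu_g)$; and (iii) the assumption that both walks are of the same type is consistent, since by Lemma \ref{lem:odd_steps} sharing all return probabilities forces the two walks to agree on whether every odd return probability vanishes.
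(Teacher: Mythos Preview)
Your argument is correct and is essentially identical to the paper's: apply Proposition~\ref{prop:estimate_1} to each walk, subtract so that the common $c_n$ cancels, and take $\mu=\min(\mu_f,\mu_g)$. The paper phrases the subtraction via a triangle-inequality estimate and only treats the Type~1 case explicitly, but the content is the same.
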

\begin{proof}
	We will just prove the first case, as the second case follows from nearly identical estimates.
	Let $c_n$ be the return probabilities of $\{a_n\}$ and $\{b_n\}$. From Proposition \ref{prop:estimate_1} and triangle inequality, the following inequalities hold:
	\begin{align*}
	&\frac{1}{\pi} \Abs{\int_{1 - \eta}^{1} y^n \left( (f^{-1}) ' (y) - (g^{-1}) '(y) \right)  \; dy} 
	\\ &= \Abs{c_n + \frac{1}{\pi} \int_{1 - \eta}^{1} y^n (f^{-1})' (y) \; dy - \left(  c_n + \frac{1}{\pi} \int_{1 - \eta}^{1} y^n (f^{-1})' (y) \; dy \right) } 
	\\ &\leq o(e^{- \mu n}) + o(e^{- \mu' n}) = o(e^{- \min(\mu, \mu') n})
	\end{align*}
\end{proof}

Proposition \ref{prop:estimate_3} will show that the asymptotic behavior of the integrals in Proposition \ref{prop:estimate_2} can only hold when $(f^{-1})'(y) = (g^{-1})'(y)$ on $(1 - \eta, 1)$. 

First, assuming this has been shown, we will prove Theorem \ref{thm:main}: 
\begin{proof}[Proof of Theorem \ref{thm:main}]
	Let $f(x) = \sum_{k = - \infty}^{\infty} a_k e^{i k x}$ and let $g(x) = \sum_{k = - \infty}^{\infty} b_k e^{i k x}$. If the return probabilities of $\{a_n\}$ and $\{b_n\}$ are all equal, then it follows from Lemma \ref{lem:odd_steps} that either they both have only odd step sizes or they both have an even step size. In either of these cases, it follows from Proposition \ref{prop:estimate_2} that the hypotheses of Proposition \ref{prop:estimate_3} hold, and hence $(f^{-1})'(y) = (g^{-1})'(y)$ on $(1 - \eta, 1)$. As $f^{-1}(1) = g^{-1}(1) = 0$, then $f^{-1}(y) = g^{-1}(y)$ on $[1 - \eta, 1]$. It follows that $f(x) = g(x)$ on some neighborhood $[0, \epsilon]$ of $0$, and from complex analyticity of $f(x)$ and $g(x)$, then $f(x) = g(x)$ in some neighborhood of 0 in $\C$. This implies that $\tilde{f}(t) =  \sum_{k = - \infty}^{\infty} a_k t^k$ and $\tilde{g}(t) = \sum_{k  = - \infty}^{\infty} b_k t^k$ are equal on the unit circle. Finally, for every $k$,
	\begin{align*}
	a_k = \frac{1}{2 \pi} \int_{\Abs{z} = 1}^{} \frac{\tilde{f}(z)}{z^{k + 1}} \; dz 
	= \frac{1}{2 \pi} \int_{\Abs{z} = 1}^{} \frac{\tilde{g}(z)}{z^{k + 1}} \; dz 
	= b_k
	\end{align*}
\end{proof}

To show that $(f^{-1})'(y) = (g^{-1})'(y)$ on $(1 - \eta, 1)$, we first provide a useful estimate involving the beta function: 

\begin{lemma}
	\label{lem:beta_estimate}
	For $x$, $y > 0$,  $y$ fixed, and $x \to \infty$,
	\begin{align*}
	\int_{0}^{1} t^{x - 1} (1 - t)^{y - 1} \; dt \sim \Gamma(y) x^{-y} \; ,
	\end{align*}
	where $\sim$ denotes that the quotient of the quantities goes to 1 as $x \to \infty$. 
\end{lemma}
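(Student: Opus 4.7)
The plan is to reduce the integral to one that converges to $\Gamma(y)$ by a rescaling substitution, then apply dominated convergence. Substituting $t = 1 - s/x$, with $dt = -ds/x$, the integral becomes
\begin{align*}
\int_0^1 t^{x-1}(1-t)^{y-1}\, dt = x^{-y} \int_0^x \left(1 - \frac{s}{x}\right)^{x-1} s^{y-1}\, ds.
\end{align*}
So it suffices to show that $I(x) := \int_0^x (1 - s/x)^{x-1} s^{y-1}\, ds \to \Gamma(y)$ as $x \to \infty$, since then $I(x) \sim \Gamma(y)$ and multiplying by $x^{-y}$ gives the claim.

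To apply dominated convergence, I would view $I(x)$ as the integral over $(0,\infty)$ of $F_x(s) := (1 - s/x)^{x-1} s^{y-1} \mathbf{1}_{(0,x)}(s)$. For fixed $s > 0$, taking logarithms gives $(x-1)\log(1 - s/x) \to -s$, so $F_x(s) \to e^{-s} s^{y-1}$ pointwise. For the dominating bound, I would use the elementary inequality $\log(1 - u) \leq -u$ for $0 \leq u < 1$ to get $(1 - s/x)^{x-1} \leq e^{-s(x-1)/x}$ on $0 \leq s < x$. For $x \geq 2$, this yields $(1 - s/x)^{x-1} \leq e^{-s/2}$, so $F_x(s) \leq e^{-s/2} s^{y-1}$, which is integrable on $(0, \infty)$ because $y > 0$.

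Then dominated convergence gives
\begin{align*}
\lim_{x \to \infty} I(x) = \int_0^\infty e^{-s} s^{y-1} \, ds = \Gamma(y),
\end{align*}
which combined with the substitution proves the asymptotic.

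The only mildly delicate step is producing the uniform dominating function; everything else is routine. Note that the lemma could alternatively be extracted from the Beta–Gamma identity $\int_0^1 t^{x-1}(1-t)^{y-1}\, dt = \Gamma(x)\Gamma(y)/\Gamma(x+y)$ together with the standard Stirling consequence $\Gamma(x)/\Gamma(x+y) \sim x^{-y}$, but the direct approach above avoids invoking Stirling and keeps the proof self-contained.
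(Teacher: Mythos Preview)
Your proof is correct, but it takes a genuinely different route from the paper. The paper simply identifies the integral as the beta function $B(x,y) = \Gamma(x)\Gamma(y)/\Gamma(x+y)$ and then applies Stirling's approximation to $\Gamma(x)$ and $\Gamma(x+y)$, citing Whittaker--Watson for both facts. You instead perform the substitution $t = 1 - s/x$ and use dominated convergence to show that $\int_0^x (1 - s/x)^{x-1} s^{y-1}\,ds \to \Gamma(y)$ directly. Your approach is more self-contained and avoids Stirling entirely, at the cost of a slightly longer argument; the paper's version is a two-line citation of classical identities. You even anticipate the paper's method in your closing remark, so you are clearly aware of both options. The dominating bound $(1 - s/x)^{x-1} \le e^{-s/2}$ for $x \ge 2$ is the only nontrivial ingredient in your argument, and it is handled cleanly.
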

\begin{proof}
	The integral on the left hand side is the beta function, $B(x, y)$. A defining characteristic of the beta function, proved in \cite[p.~253-256]{whit2003}, is  
	\begin{align*}
	B(x, y) = \frac{\Gamma(x) \Gamma(y)}{\Gamma(x + y)} \;.
	\end{align*}
	An equivalent form of Stirling's approximation which appears in \cite[p.~253]{whit2003} is:
	\begin{align*}
	\Gamma(t + 1) \sim \sqrt{2 \pi t} \left( \frac{t}{e} \right)^t \;,
	\end{align*}
	Applying Stirling's approximation to the terms $\Gamma(x)$ and $\Gamma(x + y)$ in the formula for $B(x, y)$ as $x \to \infty$ produces the result. 
\end{proof}

Finally, we will prove that if the asymptotics in Proposition \ref{prop:estimate_2} hold, then $(f^{-1})'(y) = (g^{-1})'(y)$ on $(1 - \eta, 1)$:

\begin{proposition}
	\label{prop:estimate_3}
	Let $\{a_n\}$ and $\{b_n\}$ be symmetric, primitive random walks with the same return probabilities.  Let $f(x) = \sum_{k = - \infty}^{ \infty} a_k e^{i k x}$ and $g(x) = \sum_{k = - \infty}^{\infty} b_k e^{i k x}$ have inverses that are continuous in $[1 - \eta, 1]$ and continuously differentiable in $(1 - \eta, 1)$ for some $\eta \in (0, 1)$. If 
	\begin{align*}
		 \int_{1 - \eta}^{1} y^{2n}  \left( (f^{-1}) ' (y) - (g^{-1}) '(y) \right)  \; dy = o(e^{- 2 \mu n}) 
	\end{align*}
	for some $\mu > 0$, then $(f^{-1})'(y) = (g^{-1})'(y)$ on $(1 - \eta, 1)$.
\end{proposition}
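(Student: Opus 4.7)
The plan is to let $D(y) := (f^{-1})'(y) - (g^{-1})'(y)$ and show that the hypothesized exponential decay forces $D \equiv 0$. By Proposition~\ref{prop:h_function}, there exists $\eta_0 \in (0,\eta]$ such that on $[1-\eta_0,1]$ we may write $(f^{-1})'(y) = (1-y)^{-1/2}q_f(y)$ and $(g^{-1})'(y) = (1-y)^{-1/2}q_g(y)$, with $q_f, q_g$ complex analytic in a neighborhood of $y=1$. Hence $D(y) = (1-y)^{-1/2}r(y)$ on $[1-\eta_0,1]$ where $r := q_f - q_g$ is analytic near $1$. My strategy is to force every Taylor coefficient of $r$ at $y=1$ to vanish by comparing the rates of decay of the hypothesis (exponential) and of the integral on the left (polynomial, if any coefficient were nonzero).

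First I would split
\[ I_n := \int_{1-\eta}^{1} y^{2n} D(y)\,dy = \int_{1-\eta}^{1-\eta_0} y^{2n} D(y)\, dy + \int_{1-\eta_0}^{1} y^{2n} D(y)\,dy. \]
The first piece is bounded in absolute value by $(1-\eta_0)^{2n}\int_{1-\eta}^{1-\eta_0}|D(y)|\,dy$, which is exponentially small since $D$ is continuous, hence bounded, on the compact subinterval $[1-\eta, 1-\eta_0]$ where the $(1-y)^{-1/2}$ singularity is avoided.

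For the main piece $J_n := \int_{1-\eta_0}^{1} y^{2n}(1-y)^{-1/2}r(y)\,dy$, I argue by contradiction. Suppose $r \not\equiv 0$ near $1$ and let $m$ be the smallest index with $c_m := r^{(m)}(1)/m! \neq 0$. Write $r(y) = c_m(1-y)^m + (1-y)^{m+1}s(y)$ with $s$ bounded on $[1-\eta_0,1]$, and substitute $u = 1-y$; each summand of $J_n$ becomes an incomplete beta integral $\int_0^{\eta_0}(1-u)^{2n}u^{\alpha-1}\,du$ with $\alpha = m+1/2$ and $\alpha = m+3/2$ respectively. Extending each to $[0,1]$ incurs only an $O((1-\eta_0)^{2n})$ tail, and Lemma~\ref{lem:beta_estimate} then yields the leading asymptotic
\[ J_n \sim c_m\,\Gamma(m+\tfrac{1}{2})\,(2n)^{-(m+1/2)}, \]
with the $s$-contribution absorbed into an $O(n^{-(m+3/2)})$ remainder. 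Combined with the exponentially small first piece, this gives $I_n \sim c_m\,\Gamma(m+\tfrac{1}{2})\,(2n)^{-(m+1/2)}$, which decays only polynomially, contradicting $I_n = o(e^{-2\mu n})$. Hence no such $m$ exists: every Taylor coefficient of $r$ at $1$ vanishes, so $r \equiv 0$ on its disc of analyticity and $D \equiv 0$ on $(1-\eta_0,1)$.

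The main technical obstacle I foresee is the bookkeeping for the secondary term: one must invoke Lemma~\ref{lem:beta_estimate} uniformly, using boundedness of $s$, to check that it really produces a strictly faster polynomial decay than the leading term and thus does not cancel it. A secondary (cosmetic) point is that the argument directly shows $D \equiv 0$ only on the sub-interval $(1-\eta_0, 1)$; the stated conclusion on all of $(1-\eta,1)$ can be arranged either by shrinking $\eta$ at the outset so that $\eta = \eta_0$, or noting that vanishing on any right-neighborhood of $1$ is already what the proof of Theorem~\ref{thm:main} requires before appealing to analytic continuation of $f$ and $g$.
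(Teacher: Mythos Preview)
Your argument is correct and follows essentially the same route as the paper: assume a lowest nonzero Taylor coefficient of $r=q_f-q_g$ at $y=1$, substitute $u=1-y$, reduce to (incomplete) beta integrals, and invoke Lemma~\ref{lem:beta_estimate} to obtain polynomial decay contradicting the exponential hypothesis. Your split at $1-\eta_0$ and the closing remark about extending from $(1-\eta_0,1)$ to $(1-\eta,1)$ are in fact a bit more careful than the paper, which tacitly takes $\eta$ small enough for the factorization of Proposition~\ref{prop:h_function} to hold throughout.
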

\begin{proof}
	First, note that for any $\eta \in (0, 1)$, 
	\begin{align}
	\label{eqn:prop_estimate_3_1}
	\int_{\eta}^{1} (1 - u)^{2n} u^m \; du = o(e^{- \lambda n})
	\end{align}
	for some $\lambda > 0$. 
	
	For the sake of contradiction, assume that $(f^{-1})'(y) \neq (g^{-1})'(y)$. From Proposition \ref{prop:h_function}, $(f^{-1})'(y) = (1 - y)^{- 1 /2} q(y)$ and $(g^{-1})'(y) = (1 - y)^{- 1/ 2} r(y)$, for $q(y)$ and $r(y)$ analytic near $1$. Thus, $q(y) - r(y) \neq 0$, so that $q(y) - r(y) = \sum_{k = 0}^{\infty} b_k (y - 1)^k$, where $b_m \neq 0$ for smallest such $m$.
	
	Changing variables $y = 1- u$, we have the following equalities:
	\begin{align*}
		\int_{1 - \eta}^{1} y^{2n}  \left( (f^{-1}) ' (y) - (g^{-1}) '(y) \right)  \; dy 
		& = \int_{0}^{\eta} (1 - u)^{2n} u^{- 1 / 2} (q(1 - u) - r(1 - u)) \; du 
		\\ &= \int_{0}^{\eta} (1 - u)^{2n} u^{- 1 / 2} \sum_{k = 0}^{\infty} (-1)^k b_k u^k \; du
	\end{align*}
	 
	 Let $p(u) = \sum_{k = m +1}^{\infty} (-1)^k b_k u^{k - m - 1}$, which is analytic near $0$. Let $\Abs{p(u)} \leq M$ in $[0, \eta]$. From repeated application of the triangle inequality and (\ref{eqn:prop_estimate_3_1}), we obtain the following estimate: 
	 \begin{align*}
	 & \Abs{\int_{0}^{\eta} (1 - u)^{2n} u^{- 1/2} \sum_{k = 0}^{\infty} (-1)^k b_k u^k \; du}
	 \\ & \geq \Abs{b_m} \Abs{\int_{0}^{1} (1 - u)^{2n} u^{m - 1/2} \; du } - M \Abs{\int_{0}^{1} (1 - u) u^{m + 1/2}  \; du} - o(e^{- \lambda n})
	 \end{align*}
	 
	 It follows from Lemma \ref{lem:beta_estimate} that as $n \to \infty$, 
	 \begin{align*}
	 (2n + 1)^{m + 1/2}  \Abs{\int_{1 - \eta}^{1} y^{2n}  \left( (f^{-1}) ' (y) - (g^{-1}) '(y) \right)  \; dy} \geq \Abs{b_m}
	 \end{align*}
	 
	 Finally, as $n \to \infty$, $(2n + 1)^{- m - 1/2} e^{2 \mu n} \to \infty$ for any $\mu > 0$. Thus, as $n \to \infty$, we would have 
	 \begin{align*}
	 & e^{2 \mu n} \Abs{\int_{1 - \eta}^{1} y^{2n}  \left( (f^{-1}) ' (y) - (g^{-1}) '(y) \right)  \; dy} 
	 \\ & = \left( e^{2 \mu n}  (2n + 1)^{- m - 1/2} \right) (2n + 1)^{m + 1/2}  \Abs{\int_{1 - \eta}^{1} y^n  \left( (f^{-1}) ' (y) - (g^{-1}) '(y) \right)  \; dy} 
	 \\ & \geq \Abs{b_m} \left( e^{2 \mu n}  (2n + 1)^{- m - 1/2} \right)
	 \to \infty \; ,
	 \end{align*}
	 contradicting our hypothesis. Hence, $(g^{-1})'(y) = (f^{-1})'(y)$ on $(1 - \eta, 1)$. 
\end{proof}

\section{Proof of Corollary \ref{cor:rep_theory}}
Before we prove Corollary \ref{cor:rep_theory}, we will first prove a fact about the return probabilities of symmetric random walks.

\begin{proposition}
	\label{prop:rp_asymptotics}
	Let $\{a_n\}$ be a symmetric random walk. Let $c_n$ be the return probabilities of $\{a_n\}$. Then, $\limsup_{n \to \infty} \; c_{2n}^{\enspace 1/(2n)} = 1$. 
\end{proposition}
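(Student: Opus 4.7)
The plan is to sandwich $c_{2n}^{1/(2n)}$ between two quantities both tending to $1$, by combining the trivial bound $c_{2n}\leq 1$ with a Fourier-integral lower bound obtained by restricting to a small neighborhood of $0$ where $f(x)$ is close to $1$.

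For the upper bound, I would simply observe that $c_{2n}$ is a probability, so $c_{2n}\leq 1$ and therefore $c_{2n}^{1/(2n)}\leq 1$ for every $n$, giving $\limsup_{n\to\infty} c_{2n}^{1/(2n)}\leq 1$. This is immediate and requires no machinery.

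For the lower bound, the key input is Lemma~\ref{lem:fourier}, which gives
\[
c_{2n}=\frac{1}{2\pi}\int_{-\pi}^{\pi} f(x)^{2n}\,dx,
\]
together with the fact that $f$ is real-valued (by symmetry of $\{a_n\}$), so $f(x)^{2n}\geq 0$ pointwise. Since $f$ is continuous at $0$ with $f(0)=1$, for every $\delta\in(0,1)$ there exists $\epsilon>0$ such that $f(x)\geq 1-\delta$ on $[-\epsilon,\epsilon]$. Discarding the nonnegative integrand outside this subinterval yields
\[
c_{2n}\geq \frac{1}{2\pi}\int_{-\epsilon}^{\epsilon} f(x)^{2n}\,dx \geq \frac{\epsilon}{\pi}(1-\delta)^{2n}.
\]
Taking $2n$-th roots and letting $n\to\infty$, the constant $(\epsilon/\pi)^{1/(2n)}\to 1$, so $\liminf_{n\to\infty} c_{2n}^{1/(2n)}\geq 1-\delta$. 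Since $\delta>0$ was arbitrary, $\liminf_{n\to\infty} c_{2n}^{1/(2n)}\geq 1$.

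Combining the two bounds, $1\leq \liminf_{n\to\infty} c_{2n}^{1/(2n)} \leq \limsup_{n\to\infty} c_{2n}^{1/(2n)} \leq 1$, which forces $\limsup_{n\to\infty} c_{2n}^{1/(2n)}=1$. There is no real obstacle here; the only subtlety worth naming is the use of the even exponent $2n$ to guarantee $f(x)^{2n}\geq 0$ on all of $[-\pi,\pi]$ (for odd exponents one would need extra care, since $f$ can take negative values in the Type 2 situation), which is exactly why the statement is about $c_{2n}$ rather than $c_n$.
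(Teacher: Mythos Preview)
Your argument is correct. The upper bound is the same trivial one the paper uses, and your lower bound via the Fourier representation is clean: nonnegativity of $f(x)^{2n}$ lets you discard everything outside a small neighborhood of $0$, and continuity of $f$ at $0$ with $f(0)=\sum_k a_k=1$ gives $c_{2n}\geq \tfrac{\epsilon}{\pi}(1-\delta)^{2n}$, from which the $\liminf\geq 1$ follows.

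The paper, however, takes a different route for the lower bound. It avoids the Fourier integral entirely and works probabilistically: writing $r_{n,k}=\mathrm{Pr}[x_1+\cdots+x_n=k]$ and using symmetry to get $c_{2n}=\sum_k r_{n,k}^2$, it then applies Chebyshev's inequality (using that the variance $\sigma^2=\sum_k k^2 a_k$ is finite under the standing hypothesis) to show $\sum_{|k|\leq n^2} r_{n,k}\geq 1-\sigma^2/n^2$, and finishes with Cauchy--Schwarz on $\R^{2n^2+1}$ to obtain $c_{2n}\geq (1-\sigma^2/n^2)/(2n^2+1)$. Your approach is shorter and more direct once Lemma~\ref{lem:fourier} is in hand, and it makes transparent why the even exponent matters. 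The paper's approach is self-contained on the probabilistic side and does not invoke the analytic machinery of Section~2; it would survive in settings where one has a second moment but not a convenient characteristic-function representation.
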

\begin{proof}
	Let $r_{n, k} = \mathrm{Pr}[x_1 + \cdots + x_n = k]$. Since $\{a_n\}$ is symmetric, then $r_{n, k} = r_{n, -k}$. Thus,
	\begin{align*}
	c_{2n} = r_{2n, 0} = \sum_{k = - \infty}^{\infty} r_{n, k} r_{n, -k}
	= \sum_{k = - \infty}^{\infty} r_{n, k}^2 
	\end{align*} 
	
	Let $y_n := (x_1 + \cdots + x_n) / N$, where the $x_i$ are i.i.d. discrete random variables with $\mathrm{Pr}[x_i = n] = a_n$. It follows that $E[y_n] = 0$ and $\sigma^2 := \Var(y_n) = \Var(x_i)  = \sum_{k = - \infty}^{\infty} k^2 a_k < + \infty$. It follows from Chebyshev's inequality that
	\begin{align*}
	\mathrm{Pr}[\Abs{x_1 + \cdots + x_n} > n^2] = \mathrm{Pr}[\Abs{y_n} > n] \leq \frac{\sigma^2}{n^2}
	\end{align*}
	Hence, 
	\begin{align*}
	\sum_{k = - n^2}^{n^2} r_{n, k} = \sum_{k = - \infty}^{\infty} r_{n, k}  \; - \; \sum_{\Abs{k} > n^2}^{} r_{n, k}
	= 1 - \mathrm{Pr}[\Abs{x_1 + \cdots + x_n} > n^2] \geq 1 - \frac{\sigma^2}{n^2}
	\end{align*}
	Applying the Cauchy-Schwartz inequality to the vectors $(r_{n, - n^2}, r_{n, - n^2 + 1}, \ldots, r_{n, n^2}), (1, 1, \ldots, 1) \in \R^{2 n^2 + 1}$, we have a lower bound for $c_{2n}$: 
	\begin{align*}
	c_{2n} = \sum_{k = - \infty}^{\infty} (r_{n, k})^2 
	\geq \sum_{k = - n^2}^{n^2} (r_{n, k})^2 
	\geq \left( \sum_{k = - n^2}^{n^2} r_{n, k} \right) / (2 n^2 + 1)
	\geq \frac{1 - \sigma^2 / n^2}{2 n^2 + 1}
	\end{align*}
	For $n$ sufficiently large, $ 1 - \sigma^2 / n^2 > 1/2$. For an upper bound on $c_{2n}$, $c_{2n} = \sum_{k = - \infty}^{\infty} (r_{n, k})^2 \leq \sum_{k = \infty}^{\infty} r_{n, k} = 1$.  Hence, for $n$ sufficiently large, 
	\begin{align*}
	1 \geq c_{2n}^{\enspace 1/(2n)} \geq \left( \frac{1}{4n^2 + 2}\right)^{1/(2n)}
	\end{align*}
	Taking the limsup as $n \to \infty$, then $\limsup_{n \to \infty} \; c_{2n}^{\enspace 1/(2n)} = 1$.
\end{proof}

%

Next, we prove a lemma relating symmetry and primitivity of random walks to self-duality and faithfulness of $\mathrm{U}(1)$ representations, respectively. The term \textit{\textbf{isotypical decomposition}} in Lemma \ref{lem:reps_walks} refers to the unique decomposition of any finite-dimensional complex representation of a compact group into a direct sum of irreducible representations (see Prop 1.8 in \cite{fulton1991} for more details). 

\begin{lemma}
	\label{lem:reps_walks}
	Let $\rho: \mathrm{U}(1) \to \mathrm{GL}(V)$ be a $\mathrm{U}(1)$ representation, where $V$ is a finite-dimensional complex vector space with isotypical decomposition
	\begin{align*}
	V \isomorphic \bigoplus V_n^{\dsum \alpha_n} \; ,
	\end{align*}
	where $V_n$ is the 1-dimensional representation on which $z \in \mathrm{U}(1)$ acts by multiplication by $z^n$, and $\alpha_n \in \N$.
	Then, $\rho$ is self-dual if and only if $\alpha_n = \alpha_{-n}$ for every $n$, and $\rho$ is faithful if and only if $\{n : \alpha_n \neq 0 \}$ generates $\Z$.  
\end{lemma}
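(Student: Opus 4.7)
The plan is to treat the two equivalences separately, using the classification of irreducible $\mathrm{U}(1)$ representations together with uniqueness of isotypical decomposition (Prop 1.8 of \cite{fulton1991}). Both directions reduce to elementary computations once one identifies how duality and the kernel act on the one-dimensional summands $V_n$.

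For the self-duality claim, I would start by observing that the dual representation $V_n^{*}$ is isomorphic to $V_{-n}$: since $z \in \mathrm{U}(1)$ acts on $V_n$ by multiplication by $z^n$, it acts on $V_n^{*}$ by $(z^n)^{-1} = z^{-n}$. Since taking duals commutes with direct sums, the isotypical decomposition of $V^{*}$ is
\begin{align*}
V^{*} \isomorphic \bigoplus V_{-n}^{\dsum \alpha_n} \isomorphic \bigoplus V_n^{\dsum \alpha_{-n}} \;,
\end{align*}
after reindexing $n \mapsto -n$ in the second isomorphism. By the uniqueness of isotypical decomposition, $V \isomorphic V^{*}$ if and only if the multiplicity of $V_n$ on the two sides agrees for every $n$, i.e., $\alpha_n = \alpha_{-n}$.

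For the faithfulness claim, let $S = \{n : \alpha_n \neq 0\}$. Since $z \in \mathrm{U}(1)$ acts on each summand $V_n^{\dsum \alpha_n}$ by scalar multiplication by $z^n$, the kernel of $\rho$ is
\begin{align*}
\ker \rho = \{z \in \mathrm{U}(1) : z^n = 1 \text{ for all } n \in S\} \;.
\end{align*}
If $S$ generates $\Z$, then by Bezout we may choose finitely many $n_i \in S$ and integers $c_i$ with $\sum c_i n_i = 1$; any $z \in \ker\rho$ then satisfies $z = \prod z^{c_i n_i} = 1$, so $\rho$ is faithful. Conversely, if $S$ does not generate $\Z$, let $d \geq 0$ be the nonnegative generator of the subgroup of $\Z$ it does generate. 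If $d = 0$, then $S \subseteq \{0\}$ and $\ker \rho = \mathrm{U}(1)$. If $d \geq 2$, then $z = e^{2 \pi i / d}$ is a nontrivial element of $\mathrm{U}(1)$ with $z^n = 1$ for every $n \in S$, so $\ker \rho$ is nontrivial. Either way $\rho$ is not faithful.

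No step here is genuinely hard; the mild subtlety is keeping the isotypical indexing straight in the duality argument and handling the edge case $S \subseteq \{0\}$ in the faithfulness argument, neither of which is a real obstacle.
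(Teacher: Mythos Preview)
Your proposal is correct and follows essentially the same approach as the paper: the self-duality argument is identical, and for faithfulness both you and the paper identify the kernel via the scalar actions $z\mapsto z^n$ and test whether a nontrivial root of unity survives. Your version is a bit more explicit (computing $\ker\rho$ directly, invoking B\'ezout, and handling the edge case $S\subseteq\{0\}$), whereas the paper argues more tersely, but there is no substantive difference in strategy.
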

\begin{proof}
	As taking dual commutes with direct sum, then 
	\begin{align*}
	V^* \isomorphic \bigoplus (V_n^*)^{\dsum \alpha_{n}} = \bigoplus V_{-n}^{\dsum \alpha_n} = \bigoplus V_n^{\dsum \alpha_{-n}}
	\end{align*}
	As the isotypical decomposition is unique, then $V^* = V$ precisely when $\alpha_n = \alpha_{-n}$. 
	
	If $\rho$ is faithful, then $\{n : \alpha_n \neq 0\}$ must generate $\Z$, for if $p > 1$ divides all elements of the set, then any $p^{th}$ root of unity acts the same on $V$, contradicting faithfulness. On the other hand, if the set generates $\Z$, then $z$ can be recovered from the $z^{\alpha_n}$-action on the isotypic components of $V$. 
\end{proof}

Now, we can prove Corollary \ref{cor:rep_theory}: 
\begin{proof}[Proof of Corollary \ref{cor:rep_theory}]
	Let $\rho: \mathrm{U}(1) \to \mathrm{GL}(V)$ be a self-dual, faithful $\mathrm{U}(1)$ representation, where $V$ is a finite-dimensional complex vector space with isotypical decomposition
	\begin{align*}
	V \isomorphic \bigoplus V_n^{\dsum \alpha_n} \; ,
	\end{align*}
	where $V_n$ is the 1-dimensional representation on which $z \in \mathrm{U}(1)$ acts by multiplication by $z^n$, and $a_n \in \N$.
	
	It is evident that in this decomposition, $\dim(V)^{\mathrm{U}(1)} = a_0$. In general, using distributivity of direct sum and tensor product, and $V_n \tens V_m \isomorphic V_{n + m}$, we see that $\dim (V^{\tens n})^{\mathrm{U}(1)}$ is the constant coefficient of $(\sum \alpha_k x^k)^n$. If $N = \sum \alpha_k = \dim(V)$, then the constant coefficient of $(\sum \alpha_k x^k)^n$ is just $N^n c_n$, where $c_n$ is the $n^{th}$ return probability of the random walk $\{a_n\}$, where $a_n = \alpha_n / N$. From Lemma \ref{lem:reps_walks}, $\{a_n\}$ is a symmetric, primitive random walk. 
	
	Let $\sigma: \mathrm{U}(1) \to \mathrm{GL}(W)$ be another complex, finite-dimensional, self-dual, faithful $\mathrm{U}(1)$ representation, with isotypical decomposition
	\begin{align*}
	W \isomorphic \bigoplus V_n^{\dsum \beta_n} \; .
	\end{align*}
	If $M = \dim(W)$, then using the same argument as before, $\dim(W^{\tens n})^{\mathrm{U}(1)} = M^n d_n$, where $d_n$ is the $n^{th}$ return probability of a symmetric, primitive random walk $\{b_n\}$, with $b_n = \beta_n / M$.

	Suppose that $\dim(V^{\tens n})^{\mathrm{U}(1)} = \dim(W^{\tens n})^{\mathrm{U}(1)}$, for every $n$.
	To show that $\{a_n\} = \{b_n\}$, it suffices to show that $M = N$, for then $c_n = d_n$, and from Theorem \ref{thm:main}, $a_n = b_n$, so then $\alpha_n = \beta_n$ and $V \isomorphic W$. 
	
	The following equalities are true for every $n$: 
	\begin{align*}
	M d_n^{\enspace 1/n} = \left( \dim (W^{\tens n})^{\mathrm{U}(1)}\right)^{1/n}  = \left( \dim (V^{\tens n})^{\mathrm{U}(1)}\right)^{1/n} = N c_n^{\enspace 1/n} 
	\end{align*}
	Taking the limit as $n \to \infty$ for $n$ even and using Proposition \ref{prop:rp_asymptotics}, then $M = N$. 
\end{proof}

\section{Acknowledgements}
This research was completed during the author's undergraduate senior thesis at Indiana University Bloomington, supervised by Professor Michael Larsen. I would like to thank Professor Larsen for his guidance throughout the project and his help editing this paper.  

\bibliographystyle{plain}
\bibliography{return_prob}

\end{document}